\begin{document}

\title{A user's guide to co/cartesian fibrations}
\date{\today}
\author{\textsc{Aaron Mazel-Gee}}

\begin{abstract}
We formulate a model-independent theory of co/cartesian morphisms and co/cartesian fibrations: that is, one which resides entirely \textit{within the $\infty$-category of $\infty$-categories}.  We prove this is suitably compatible with the corresponding quasicategorical (and in particular, model-dependent) notions studied by Joyal and Lurie.
\end{abstract}

\maketitle

\vspace{-25pt}

\setcounter{tocdepth}{1}
\tableofcontents

\setcounter{section}{-1}

\vspace{-25pt}

\section{Introduction}

\subsection*{Outline and goals}

This note concerns the notions of \textit{cocartesian fibrations} and of \textit{cartesian fibrations}, as well as their connection to the \textit{Grothendieck construction}, in the $\infty$-categorical context.

\begin{itemize}

\item The ur-example of a \bit{cartesian fibration} is the forgetful functor $\VBdl \ra \Mfld$ from the category of vector bundles -- that is, of pairs $(M,V)$ of a manifold $M$ and a vector bundle $V \da M$ -- to the category of manifolds; the fact that this is a cartesian fibration encodes the observation that vector bundles can be \textit{pulled back} 
in a category-theoretically meaningful way.  (See \cref{ex vector bundles and tangent bundle}.)

\item Meanwhile, the ur-example of the \bit{Grothendieck construction} accounts for the equivalence between the two competing definitions of a \textit{stack} in algebraic geometry, either as a functor to the category of groupoids or as a ``category fibered in groupoids''.

\end{itemize}

The purpose of this note is twofold.
\begin{enumerate}

\item

On the one hand, we offer a relaxed and informal discussion of co/cartesian fibrations and their motivation coming from the Grothendieck construction, which assumes no prerequisites beyond a vague sense of the meaning of the signifier ``$\infty$-category''.

\item

On the other hand, we carefully prove that our relaxed and informal discussion was in fact actually \bit{completely rigorous all along}.  More precisely, we show that our definitions, which are formulated \textit{within the $\infty$-category of $\infty$-categories}, are suitably compatible with the corresponding notions in quasicategories (see \cref{invce of cart morphism} and \cref{invce of cart fibn}).\footnote{As the quasicategorical definitions are not manifestly model-independent, the proofs of these results are nontrivial.  Indeed, they are fairly involved, and rely on rather subtle model-categorical manipulations.}

\end{enumerate}
The bulk of this note consists in \cref{section ramble on co-cart and gr} (where we offer our relaxed and informal discussion) and in \cref{section of proofs} (where we present our proofs).  Separating these, in \cref{section of definitions} we distill the discussion of \cref{section ramble on co-cart and gr} into precise model-independent definitions of {co/cartesian morphisms} and of {co/cartesian fibrations}.

\subsection*{Wherefore model-independence?}

As for the technical content of this note, the skeptical reader is completely justified in asking: \textit{Why does this matter?}  We offer three related and complementary responses.

\begin{enumeratesub}

\item\label{allows to work invariantly}

The first and most obvious justification for model-independence is that it allows one to \bit{work model-independently}.  Being a vastly general and broadly applicable home for \textit{derived mathematics}, the theory of $\infty$-categories has recently found widespread and exciting use in a plethora of different areas -- for instance in the geometric Langlands program and in mathematical physics, to name but two (closely related) examples.  In such applications, ``$\infty$-categories'' are generally manipulated in a purely formal fashion, a rule of thumb being that anything the typical user of $\infty$-categories would like to study should be accessible without reference to model-dependent notions such as quasicategories, various sorts of fibrations between them, their individual simplices, etc.

For the most part, the theory of quasicategories -- being itself soundly founded in the theory of model categories -- allows for direct and straightforward manipulation of the corresponding model-independent notions: by and large, the definitions visibly descend to the \textit{underlying $\infty$-category} of the model category $s\Set_\Joyal$. 
However, the theory of co/cartesian fibrations is a glaring exception.  It is therefore \textit{not} a priori meaningful to work with these notions in a model-independent fashion.  In order to allow the myriad users of $\infty$-category theory throughout mathematics to employ the theory of co/cartesian fibrations -- and in particular, to allow them to appeal to the various results which have been proved about their incarnations in quasicategories --, it seems like nothing more than \textit{good homotopical manners} to prove that these quasicategorical definitions do indeed descend to the underlying $\infty$-category of $s\Set_\Joyal$ as well.

In particular, our results provide the crucial input to a model-independent reading of 
\cite{LurieHA}, which work is premised heavily on the notion of co/cartesian morphisms in quasicategories. 

\item

The next most obvious justification for model-independence is that it provides \bit{conceptual clarity}: a model-independent definition is \textit{by definition} unfettered by point-set or model-dependent notions which obfuscate its true meaning and significance.

For a rather grotesque example, recall that one can define the ``$n\th$ homotopy group'' of a based simplicial set $(Y,y) \in s\Set_*$ as certain subquotient
\[ \pi_n(Y,y) = \left. \left\{ \sigma \in (\bbR Y)_n : \delta^n_i(\sigma) = \sigma^{\circ (n-1)}(y) \textup{ for all } 0 \leq i \leq n \right\} \right/ \sim \]
of the set of $n$-simplices of a fibrant replacement $Y \we \bbR Y \fibn \Delta^0$ (relative to the standard Kan--Quillen model category structure $s\Set_\KQ$, and with respect to the induced basepoint $\Delta^0 \xra{y} Y \ra \bbR Y$).  This definition completely obscures a number of important features of homotopy groups:
\begin{itemizesmall}
\item that they are independent of the choice of fibrant replacement;
\item that they actually form a \textit{group} at all (for $n \geq 1$, let alone an \textit{abelian} group for $n \geq 2$);
\item that a path connecting two basepoints (which itself may only be representable by a \textit{zigzag} of edges in $Y$ itself) induces a \textit{conjugation isomorphism} (for $n \geq 1$);
\item that they are \textit{corepresentable} in the homotopy category $\ho(\S_*)$ of pointed spaces.
\end{itemizesmall}

Though the quasicategorical definitions of co/cartesian morphisms and co/cartesian fibrations are not nearly so abstruse, they are nevertheless model-dependent, and hence the assertion that they have \textit{homotopical meaning} requires further proof.

\item

Lastly, proving that quasicategorical definitions are model-independent also allows mathematicians employing \textit{specific} but \bit{alternate models for $\infty$-categories} to employ these notions and their attending results while continuing to work within their native context.  As is particularly well-known to homotopy theorists, different model categories presenting the same $\infty$-category (e.g.\! and i.e.\! that of spectra) can be advantageous for different purposes.  For a few examples of where such alternate models for $\infty$-categories arise:
\begin{itemizesmall}
\item $\Top$-enriched categories appear in geometric topology;
\item $\A_\infty$-categories appear in mirror symmetry;
\item dg-categories appear in derived algebraic geometry and homological algebra;
\item Segal spaces (and indeed, Segal $\Theta_n$-spaces) appear in bordism theory;
\item $s\Set$-enriched categories appear in homotopy theory (both as hammock localizations of relative categories (e.g.\! model categories) and as (the full subcategories of bifibrant objects of) simplicial model categories).
\end{itemizesmall}

In some sense, this is mainly a preservation-of-sanity issue.  All of the various the model categories which present ``the $\infty$-category of $\infty$-categories'' are connected by an intricate web of Quillen equivalences (see \cite{BSP}).  It is therefore already possible to wrangle a given question asked in any of these model categories into a corresponding question asked in the model category $s\Set_\Joyal$, so long as one is willing to take the appropriate co/fibrant replacements at every step to ensure that one is computing the \textit{derived} values of these various Quillen equivalences.  However, this procedure is a hassle at best, and at worst can effectively destroy all hope of answering the given question (since taking co/fibrant replacements generally drastically alters the object's point-set features).

This motivation can therefore be seen as something of a ``down-then-up'' maneuver: in proving that these notions actually descend to the underlying $\infty$-category which is common to \textit{all} of these distinct model categories of $\infty$-categories, we prove that all of the results which have been proved in quasicategories can be brought to bear while working in \textit{any} of these models -- no wrangling required.

\end{enumeratesub}

\subsection*{Conventions}

We refer to Lurie's twin tomes \cite{LurieHTT} and \cite{LurieHA} for general references on quasicategories.  For further motivation regarding the Grothendieck construction (including a variety of applications), we refer the interested reader to \cite{MIC-gr}.  Lastly, we refer to \cite[\sec A]{MIC-sspaces} for details and specifics on working ``within the $\infty$-category of $\infty$-categories'' (as well as for further clarification regarding our conventions).

\subsection*{Acknowledgments}

It is a pleasure to acknowledge our debt of influence to David Ayala, 
both for the generalities as well as for various specific observations that appear throughout this note.  We are also deeply grateful to Piotr Pstragowski for his suggested outline of the proof of \cref{invce of cart morphism}, and to Zhen Lin Low for countless conversations regarding the ideas that appear here.  Finally, we would like to thank Montana State University for its hospitality as well as UC Berkeley's Geometry and Topology RTG grant (which is part of NSF grant DMS-0838703) for its generous funding of our stay there (during which this note was written).

\section{A leisurely soliloquy on co/cartesian fibrations and the Grothendieck construction}\label{section ramble on co-cart and gr}

A fundamental principle in mathematics is that objects do not exist only in isolation, but tend to occur in \textit{families}.  Perhaps the most basic example is that a covering space
\[ \begin{tikzcd}
E \arrow{d}[swap]{f} \\ B
\end{tikzcd} \]
can be thought of as a family of sets, parametrized by the base space $B$: a point $b \in B$ corresponds to the fiber $f^{-1}(b) \subset E$.  This allows for a natural shift in perspective: our covering, a map whose \textit{target} is the space $B$, is simultaneously classified by a map whose \textit{source} is the space $B$, namely a map
\[ \begin{tikzcd}
B \arrow{r} & \Set^\simeq
\end{tikzcd} \]
to the maximal subgroupoid $\Set^\simeq \subset \Set$ of the category of sets.  Indeed, this construction furnishes an isomorphism
\[ \begin{tikzcd}
\C\textup{ov}(B) \arrow{r}{\cong} & {[ B , \Set^\simeq ]}
\end{tikzcd} \]
from the set of covering spaces of $B$ to the set of homotopy classes of maps $B \ra \Set^\simeq$.


But this, in turn, allows for another a shift in perspective.  Returning to our covering space, note that a path $b_1 \ra b_2$ between two points of $B$ provides an isomorphism $f^{-1}(b_1) \xra{\cong} f^{-1}(b_2)$ between their respective fibers.  In other words, it is only because \bit{all morphisms in a space are invertible} that our classifying map $B \ra \Set$ factors through the maximal subgroupoid $\Set^\simeq \subset \Set$.

\begin{qn}\label{qn if morphisms in space are not all invertible}
If $\B$ is a ``space whose morphisms are not all invertible'' -- that is, if $\B$ is an \textit{$\infty$-category} --, then what, exactly, is classified by a map $\B \ra \Set$?
\end{qn}

The answer to this question -- and to its successive generalizations along the inclusions
\[ \Set \subset \S \subset \Cati \]
of the category of sets into the $\infty$-categories $\S$ of spaces and $\Cati$ of $\infty$-categories -- is provided by the \bit{Grothendieck construction}, which furnishes an equivalence of $\infty$-categories
\[ \begin{tikzcd}
\Fun(\B,\Cati) \arrow{r}{\Gr}[swap]{\sim} & \coCartFib(\B)
\end{tikzcd} \]
from the $\infty$-category of functors $\B \ra \Cati$ to the $\infty$-category of \bit{cocartesian fibrations} over $\B$, a subcategory
\[ \coCartFib(\B) \subset (\Cati)_{/\B} \]
of the $\infty$-category of $\infty$-categories lying over $\B$.

Given a functor
\[ \begin{tikzcd}
\B \arrow{r}{F} & \Cati ,
\end{tikzcd} \]
let us describe the salient features of the resulting cocartesian fibration
\[ \begin{tikzcd}
\E \arrow{d}[swap]{f} \\ \B
\end{tikzcd} \]
which it classifies.
\begin{enumerate}

\item

Over an object $b \in \B$, the fiber $f^{-1}(b) \subset \E$ is canonically equivalent to $F(b) \in \Cati$.

\item\label{cocartesian lifts exist}

Given a morphism $b_1 \xra{\varphi} b_2$ in $\B$ and an object $e \in f^{-1}(b_1)$ of the fiber over its source, there is a canonical morphism
\[ e \ra \varphi_* (e) \]
in $\E$ which projects to $\varphi$, called an \textit{$f$-cocartesian lift} (or simply a \textit{cocartesian lift}) of $\varphi$ relative to $e$, such that the canonical equivalence $f^{-1}(b_2) \simeq F(b_2)$ identifies the object $\varphi_*(e) \in f^{-1}(b_2)$ with the object $(F\varphi)(e) \in F(b_2)$.  This is illustrated in \cref{diagram of cocartesian lift}.
\begin{figure}[h]
\[ \begin{tikzcd}[row sep=0.5cm, column sep=0.5cm]
e \arrow[dashed]{r} & \varphi_*(e) \\
& & \E \arrow{dddd}[swap]{f} \\
\ \\
\ \\
\ \\
& & \B \arrow{rrrr}[swap]{F} & & & & \Cati \\
b_1 \arrow{r}{\varphi} & b_2 & \ & \ & \ & \ & \ & F(b_1) \arrow{r}{F\varphi} & F(b_2) \\
& & & & & & & e \arrow[mapsto]{r} & (F\varphi)(e)
\end{tikzcd} \]
\caption{An illustration of a cocartesian morphism.}
\label{diagram of cocartesian lift}
\end{figure}

\item\label{factorization system in cocart fibn}

An arbitrary morphism $e_1 \ra e_2$ in $\E$ admits a \textit{unique factorization} as a cocartesian morphism followed by a morphism lying in the fiber $f^{-1}(b_2)$ -- which we will therefore refer to as a \textit{fiber morphism} --, as illustrated in \cref{diagram of a cocart-fib factorizn}.
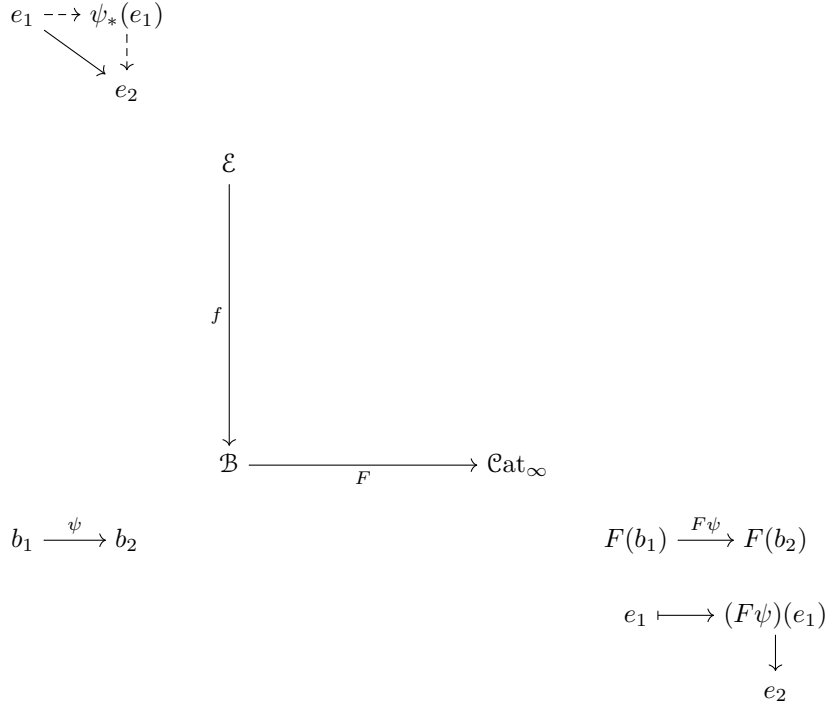
\begin{figure}[h]
\[ \begin{tikzcd}[row sep=0.5cm, column sep=0.5cm]
e_1 \arrow{rd} \arrow[dashed]{r} & \psi_*(e_1) \arrow[dashed]{d} \\
& e_2 \\
& & \E \arrow{dddd}[swap]{f} \\
\ \\
\ \\
\ \\
& & \B \arrow{rrrr}[swap]{F} & & & & \Cati \\
b_1 \arrow{r}{\psi} & b_2 & \ & \ & \ & \ & \ & F(b_1) \arrow{r}{F\psi} & F(b_2) \\
& & & & & & & e_1 \arrow[mapsto]{r} & (F\psi)(e_1) \arrow{d} \\
& & & & & & & & e_2
\end{tikzcd} \]

\caption{An illustration of the factorization system in a cocartesian fibration.}
\label{diagram of a cocart-fib factorizn}
\end{figure}
Under the equivalence $f^{-1}(b_2) \simeq F(b_2)$, the morphism $\psi_*(e_1) \ra e_2$ in $f^{-1}(b_2)$ corresponds to a morphism $(F\psi)(e_1) \ra e_2$ in $F(b_2)$.  Thus, we have canonical equivalences
\[ \hom_\E(e_1,e_2) \simeq \hom_{f^{-1}(b_2)}(\psi_*(e_1),e_2) \simeq \hom_{F(b_2)}((F\psi)(e_1),e_2) \]
of hom-spaces.

\end{enumerate}

This informal description already makes visible an exciting feature of the Grothendieck construction, namely that it \textit{reduces category level}.  For instance, in \cref{diagram of cocartesian lift}, we see that the Grothendieck construction translates
\[ e \mapsto (F\varphi)(e) , \]
an assertion about a functor \textit{between $\infty$-categories}, into
\[ e \ra \varphi_*(e) , \]
a morphism \textit{within a single $\infty$-category}.

\begin{ex}\label{example of lax symm mon fctrs and algebra in particular}
Let us illustrate just a hint of the bookkeeping power which results from this reduction of category level.  First of all, the datum of a \textit{monoidal} category $(\C,\otimes)$ can be encoded as a certain functor
\[ \begin{tikzcd}[column sep=1.5cm]
\bD^{op} \arrow{r}{\Bar(\C)_\bullet} & \Cat ,
\end{tikzcd} \]
namely its \textit{bar construction} (as a monoid object in the symmetric monoidal category $(\Cat,\times)$): this is given on objects by $\Bar(\C)_n = \C^{\times n}$, while its structure maps encode the monoidal structure on $\C$ and the unit map $\pt_\Cat \simeq \{ \unit_\C \} \hookra \C$.  Similarly, we can encode the datum of a \textit{symmetric monoidal} category $(\C,\otimes)$ as a functor
\[ \Fin_* \ra \Cat \]
from the category of finite pointed sets: this takes an object $T_+ = T \sqcup \{ * \}$ to the category $\C^{\times T}$, and it takes a morphism $T_+ \xra{\alpha} U_+$ to the functor $\C^{\times T} \ra \C^{\times U}$ described by the formula
\[ ( c_t )_{t \in T} \mapsto \left( \bigotimes_{t \in \alpha^{-1}(u)} c_t \right)_{u \in U} \]
where, by convention, a monoidal product indexed over an empty set is defined to be the unit object $\unit_\C \in \C$.  (We note in passing that if $\alpha(t) = * \in U_+$ for some $t \in T$, then $c_t$ does not appear in any of the resulting monoidal products indexed by the elements $u \in U$: it is simply ``thrown away''.)  It follows that we can equivalently consider a symmetric monoidal category $(\C,\otimes)$ as a cocartesian fibration
\[ \begin{tikzcd}
\C^\otimes \arrow{d}[swap]{p} \\ \Fin_*
\end{tikzcd} \]
over the category of finite pointed sets.  For instance, writing $\brax{n} = \{ 1, \ldots, n\}_+$, the unique map $\brax{2} \xra{\mu} \brax{1}$ in $\Fin_*$ satisfying $\mu^{-1}(*) = \{ * \}$ has cocartesian lifts of the form
\[ (c_1,c_2) \ra (c_1 \otimes c_2) , \]
which morphisms therefore encode the monoidal product $\C \times \C \xra{- \otimes -} \C$.

Now, in this language, a \textit{symmetric monoidal functor}
\[ (\C,\otimes) \xra{F} (\D,\boxtimes) \]
is equivalent data to that of a morphism of cocartesian fibrations
\[ \begin{tikzcd}
\C^\otimes \arrow{rr}{F} \arrow{rd}[swap]{p} & & \D^\boxtimes \arrow{ld}{q} \\
& \Fin_*
\end{tikzcd} \]
over $\Fin_*$: over an object $T_+ \in \Fin_*$ the induced map on fibers is given by $\C^{\times n} \xra{F^{\times n}} \D^{\times n}$, and the fact that the functor respects the monoidal products is encoded by the fact that it preserves cocartesian morphisms.  For instance, the $p$-cocartesian morphism
\[ (c_1,c_2) \ra (c_1 \otimes c_2) \]
of $\C^\otimes$ lying over $\brax{2} \xra{\mu} \brax{1}$ is taken to a morphism
\[ (F(c_1),F(c_2)) \ra F(c_1 \otimes c_2) \]
of $\D^\boxtimes$ also lying over $\brax{2} \xra{\mu} \brax{1}$, and the assertion that this is $q$-cocartesian guarantees a unique isomorphism
\[ F(c_1) \boxtimes F(c_2) \cong F(c_1 \otimes c_2) \]
in $\D \cong \D^{\times 1}$.

However, more is true: even if $F$ is now only a \textit{lax symmetric monoidal functor}, it still defines a morphism \textit{among} cocartesian fibrations (in constrast to ``a morphism \textit{of} cocartesian fibrations''), but in general it will \textit{not} preserve the cocartesian morphisms.  For instance, the $p$-cocartesian morphism $(c_1,c_2) \ra (c_1 \otimes c_2)$ in $\C^\otimes$ will be sent to an \textit{arbitrary} morphism $(F(c_1),F(c_2)) \ra F(c_1 \otimes c_2)$, which then admits a unique cocartesian/fiber factorization
\[ \begin{tikzcd}[row sep=0.5cm, column sep=0.5cm]
(F(c_1),F(c_2)) \arrow[dashed]{r} \arrow{rd} & (F(c_1) \boxtimes F(c_2)) \arrow[dashed]{d} \\
& F(c_1 \otimes c_2)
\end{tikzcd} \]
in $\D^\boxtimes$, in which the fiber morphism is the ``structure map'' witnessing the laxness of $F$ (at the pair of objects $c_1,c_2 \in \C$).

As a special case, note that the identity map of $\Fin_*$ is a cocartesian fibration, which corresponds to the canonical (and unique) symmetric monoidal structure on the terminal category $\pt_\Cat \in \Cat$.  Then, a \textit{commutative algebra object} in the symmetric monoidal category $(\C,\otimes)$ is nothing but a lax symmetric monoidal functor
\[ \pt_\Cat \xra{A} (\C,\otimes) . \]
Moreover, the functoriality of commutative algebra objects for a lax symmetric monoidal functor
\[ (\C,\otimes) \xra{F} (\D,\boxtimes) \]
is encoded simply by the composition
\[ \begin{tikzcd}
\Fin_* \arrow{r}{A} \arrow{rd}[swap]{\id_{\Fin_*}} & \C^\otimes \arrow{r}{F} \arrow{d}[swap]{p} & \D^\boxtimes \arrow{ld}{q} \\
& \Fin_*
\end{tikzcd} \]
of morphisms among cocartesian fibrations.  (Of course, we can equivalently write the map $A$ as a \textit{section} of the cocartesian fibration $\C^\otimes \ra \Fin_*$; then, the functoriality of commutative algebras for lax symmetric monoidal functors is encoded by the functoriality of sections.)
\end{ex}

\begin{rem}\label{need to define inert}
In \cref{example of lax symm mon fctrs and algebra in particular}, we were careful not to say that a lax symmetric monoidal functor (or, in particular, an algebra object) was exactly \textit{characterized} as a morphism
\[ \begin{tikzcd}
\C^\otimes \arrow{rr}{F} \arrow{rd}[swap]{p} & & \D^\boxtimes \arrow{ld}{q} \\
& \Fin_*
\end{tikzcd} \]
among cocartesian fibrations.  This was only because we had not yet introduced a certain bit of terminology.  First of all, let us say that a morphism $T_+ \xra{\alpha} U_+$ in $\Fin_*$ is \textit{inert} if for all $u \in U$, the preimage $\alpha^{-1}(u)$ has exactly one element.  Inasmuch as the basepoints of objects in $\Fin_*$ might be thought of as ``trash receptacles'', such an inert morphism should therefore be thought of as parametrizing the operation of ``throwing away'' some specified subset of the $T$-indexed list of objects.  For instance, the unique map $\brax{2} \xra{\rho} \brax{1}$ in $\Fin_*$ satisfying $\rho^{-1}(1) = \{ 2 \}$ has $p$-cocartesian lifts of the form
\[ (c_1,c_2) \ra c_2 . \]
Then, a morphism in $\C^\otimes$ is called \textit{$p$-inert} (or simply \textit{inert}) if it is $p$-cocartesian and lies over an inert morphism in $\Fin_*$.  Now, we can \textit{define} a lax symmetric monoidal functor
\[ (\C,\otimes) \xra{F} (\D,\boxtimes) \]
to be a commutative triangle as above which preserves inert morphisms (i.e.\! which takes $p$-inert morphisms to $q$-inert morphisms).
\end{rem}

\begin{rem}
The observations of \cref{example of lax symm mon fctrs and algebra in particular} and \cref{need to define inert} form the foundations of the extremely versatile theory of \textit{$\infty$-operads} introduced and studied in \cite[Chapter 2]{LurieHA}.
\end{rem}

In \cref{qn if morphisms in space are not all invertible}, we made an implicit choice when generalizing morphisms
\[ \begin{tikzcd}
B \arrow{r} & \Set^\simeq
\end{tikzcd} \]
of $\infty$-groupoids to morphisms
\[ \begin{tikzcd}
\B \arrow{r} & \Set
\end{tikzcd} \]
of $\infty$-categories: we chose to study \textit{covariant} functors.  There is also a \textit{contravariant} Grothendieck construction
\[ \begin{tikzcd}
\Fun(\B^{op},\Cati) \arrow{r}{\Grop}[swap]{\sim} & \CartFib(\B)
\end{tikzcd} \]
from the $\infty$-category of functors $\B^{op} \ra \Cati$ to the $\infty$-category of \bit{cartesian fibrations} over $\B$, which is likewise a subcategory
\[ \CartFib(\B) \subset (\Cati)_{/\B} \]
of the $\infty$-category of $\infty$-categories lying over $\B$.  In parallel with the salient features of a cocartesian fibration described above, let us briefly describe those of the cartesian fibration
\[ \begin{tikzcd}
E \arrow{d}[swap]{f} \\ \B
\end{tikzcd} \]
classified by a functor
\[ \begin{tikzcd}
\B^{op} \arrow{r}{F} & \Cati .
\end{tikzcd} \]

\begin{enumerate}

\item Over an object $b \in \B$, the fiber $f^{-1}(b) \subset \E$ is canonically equivalent to $F(b) \in \Cati$.

\item Given a morphism $b_1 \xra{\varphi} b_2$ in $\B$ and an object $e \in f^{-1}(b_2)$ of the fiber over its \textit{target}, there is a canonical morphism
\[ \varphi^*(e) \ra e \]
in $\E$ which projects to $\varphi$, called a \textit{cartesian lift} of $\varphi$ (relative to $e$).

\item An arbitrary morphism $e_1 \ra e_2$ in $\E$ projecting to a map $b_1 \xra{\psi} b_2$ in $\B$ now admits a unique factorization
\[ \begin{tikzcd}[row sep=0.5cm, column sep=0.5cm]
e_1 \arrow{rd} \arrow[dashed]{d} \\
\psi^*(e_2) \arrow[dashed]{r} & e_2
\end{tikzcd} \]
as a fiber morphism followed by a cartesian morphism.

\end{enumerate}

\begin{ex}\label{ex vector bundles and tangent bundle}
Consider the category $\VBdl$ of vector bundles: its objects are the pairs $(M , V)$ of a manifold $M$ and a vector bundle $V \da M$, and its morphisms are commutative squares
\[ \begin{tikzcd}
V \arrow{r} \arrow{d} & W \arrow{d} \\
M \arrow{r} & N
\end{tikzcd} \]
(of a morphism of manifolds and a compatible morphism of (total spaces of) vector bundles).  Then, the forgetful functor
\[ \begin{tikzcd}
\VBdl \arrow{d}[swap]{\forget_\VBdl} \\ \Mfld
\end{tikzcd} \]
to the category of manifolds is a cartesian fibration.  Given a morphism $M \xra{\varphi} N$ in $\Mfld$ and a vector bundle $W \da N$, a cartesian lift is provided by the \textit{pullback vector bundle}, which is defined by a pullback square
\[ \begin{tikzcd}
\varphi^*(W) \arrow{r} \arrow{d} & W \arrow{d} \\
M \arrow{r}[swap]{\varphi} & N
\end{tikzcd} \]
of underlying topological spaces.  The fiber/cartesian factorization system in this cartesian fibration translates into the assertion that for any vector bundle $V \da M$, the induced diagram
\[ \begin{tikzcd}[row sep=1.5cm]
\hom_{\VBdl(M)}(V \da M , \varphi^*(M) \da M) \arrow{r} \arrow{d} & \hom_\VBdl(V \da M , W \da N) \arrow{d} \\
\{ \varphi \} \arrow[hook]{r} & \hom_\Mfld(M,N)
\end{tikzcd} \]
is a pullback square in $\Set$, where $\VBdl(M)$ denotes the fiber $\forget_\VBdl^{-1}(M)$ of the forgetful functor over the object $M \in \Mfld$ (or equivalently, the value at $M$ of the functor
\[ \begin{tikzcd}[column sep=1.5cm]
\Mfld^{op} \arrow{r}{\VBdl} &  \Cat
\end{tikzcd} \]
 classifying our cartesian fibration).

There is a canonical section
\[ \begin{tikzcd}
\VBdl \arrow{d}[swap]{\forget_\VBdl} \\ \Mfld \arrow[dashed, bend right=50]{u}[swap, pos=0.45]{T}
\end{tikzcd} \]
of the forgetful functor, called the \textit{tangent bundle}: this takes a manifold $M \in \Mfld$ to its tangent bundle $TM \da M$.  Note that this is \textit{not} a cartesian section: it does not take morphisms in $\Mfld$ to cartesian morphisms of the cartesian fibration.  (Correspondingly, this does not arise from a natural transformation
\[ \begin{tikzcd}[column sep=1.5cm]
\Mfld^{op} \arrow[bend left=50]{r}{\const(\pt_\Cat)}[swap, transform canvas={yshift=-1.5em}]{\Downarrow} \arrow[bend right=50]{r}[swap]{\VBdl} & \Cat
\end{tikzcd} \]
between $\Cat$-valued functors on $\Mfld^{op}$.)  In other words, given an arbitrary morphism $M \xra{\varphi} N$ of manifolds, we obtain a canonical map $TM \ra \varphi^*(TN)$ in $\VBdl(M)$, but this map is not generally an isomorphism.  However, it is an isomorphism (and the morphism $TM \ra \varphi^*(TN)$ is a cartesian lift of $\varphi$) whenever the morphism $\varphi$ is an \textit{open embedding}.
\end{ex}

The following example illustrates the essential consequence on hom-sets (or hom-spaces) of a functor being a co/cartesian fibration, which was alluded to in \cref{ex vector bundles and tangent bundle} (but in which case the notation would have been a bit unwieldy if we had tried to elaborate fully on this consequence there).

\begin{ex}\label{ex forget Top to Set}
The forgetful functor
\[ \begin{tikzcd}
\Top \arrow{d}[swap]{\forget_\Top} \\ \Set
\end{tikzcd} \]
is a cartesian fibration.  Given a morphism $U \ra Y$ in $\Set$ and a topological space $\Y \in \Top$ equipped with an isomorphism $\forget_\Top(\Y) \cong Y$ in $\Set$, a $\forget_\Top$-cartesian lift is provided by endowing the set $U \in \Set$ with the \textit{induced topology}: this yields a topological space $\U \in \Top$ equipped with a map $\U \ra \Y$ in $\Top$ and an isomorphism $\forget_\Top(\U) \cong U$ in $\Set$, which has the universal property that for any $\Z \in \Top$ with underlying set $Z = \forget_\Top(\Z) \in \Set$, the resulting diagram
\[ \begin{tikzcd}[row sep=1.5cm]
\hom_\Top(\Z,\U) \arrow{r} \arrow{d} & \hom_\Top(\Z,\Y) \arrow{d} \\
\hom_\Set( Z , U ) \arrow{r} & \hom_\Set(Z,Y)
\end{tikzcd} \]
is a pullback square in $\Set$.  (If the morphism $U \ra Y$ is actually the inclusion of a subset, this specializes to define the \textit{subspace topology} on the set $U$.)
\end{ex}

\section{Definitions}\label{section of definitions}

In this brief section, we give precise definitions of the concepts we have introduced in \cref{section ramble on co-cart and gr}.

\begin{defn}
Let $\E \xra{f} \B$ be a functor of $\infty$-categories.
\begin{enumerate}
\item
We say that a morphism $e_1 \xra{\varphi} e_2$ in $\E$ is \bit{$f$-cocartesian} (or simply \bit{cocartesian}, if the functor $f$ is clear from the context) if it induces a pullback diagram
\[ \begin{tikzcd}[column sep=1.5cm, row sep=1.5cm]
\E_{e_2/} \arrow{r}{-\circ \varphi} \arrow{d}[swap]{f} & \E_{e_1/} \arrow{d}{f} \\
\B_{f(e_2)/} \arrow{r}[swap]{-\circ f(\varphi)} & \B_{f(e_1)/}
\end{tikzcd} \]
in $\Cati$.  In this case, we call $\varphi$ an \bit{$f$-cocartesian lift} (or simply a \bit{cocartesian lift}) of $f(\varphi)$ relative to $e_1$.  We then say that the functor $f$ is a \bit{cocartesian fibration} if every object of
\[ \Fun([1],\B) \underset{s,\B,f}{\times} \E \]
admits an $f$-cocartesian lift.
\item
Dually, we say that a morphism $e_1 \xra{\varphi} e_2$ in $\E$ is \bit{$f$-cartesian} (or simply \bit{cartesian}) if it induces a pullback diagram
\[ \begin{tikzcd}[column sep=1.5cm, row sep=1.5cm]
\E_{/e_1} \arrow{r}{\varphi \circ -} \arrow{d}[swap]{f} & \E_{/e_2} \arrow{d}{f} \\
\B_{/f(e_1)} \arrow{r}[swap]{f(\varphi) \circ -} & \B_{/f(e_2)}
\end{tikzcd} \]
in $\Cati$.  In this case, we call $\varphi$ an \bit{$f$-cartesian lift} (or simply a \bit{cartesian lift}) of $f(\varphi)$ relative to $e_2$.  We then say that the functor $f$ is a \bit{cartesian fibration} if every object of
\[ \Fun([1],\B) \underset{t,\B,f}{\times} \E \]
admits an $f$-cartesian lift.
\end{enumerate}
\end{defn}

\begin{rem}
Let $\E \xra{f} \B$ be a functor of $\infty$-categories.  The crucial consequence of a morphism $e_1 \xra{\varphi} e_2$ in $\E$ being $f$-cocartesian is that for any object $e \in \E$, the induced diagram
\[ \begin{tikzcd}[row sep=1.5cm]
\hom_\E(e_2,e) \arrow{r} \arrow{d} & \hom_\E(e_1,e) \arrow{d} \\
\hom_\B(f(e_2),f(e)) \arrow{r} & \hom_\B(f(e_1),f(e))
\end{tikzcd} \]
is a pullback square in $\S$.  Dually, the crucial consequence of a morphism $e_1 \xra{\varphi} e_2$ in $\E$ being $f$-cartesian is that for any object $e \in \E$, the induced diagram
\[ \begin{tikzcd}[row sep=1.5cm]
\hom_\E(e,e_1) \arrow{r} \arrow{d} & \hom_\E(e,e_2) \arrow{d} \\
\hom_\B(f(e),f(e_1)) \arrow{r} & \hom_\B(f(e),f(e_2))
\end{tikzcd} \]
is a pullback square in $\S$.  (Recall \cref{ex forget Top to Set}, and see \cite[Proposition 2.4.1.10]{LurieHTT}.)
\end{rem}

\begin{rem}
A cocartesian fibration $\E \xra{f} \B$ whose fibers $f^{-1}(b)$ are all $\infty$-groupoids is called a \textit{left fibration}.  These assemble into the full subcategory $\LFib(\B) \subset \coCartFib(\B)$.  Correspondingly, the (covariant) Grothendieck construction restricts to an equivalence
\[ \begin{tikzcd}
\Fun(\B,\Cati) \arrow{r}{\Gr}[swap]{\sim} & \coCartFib(\B) \\
\Fun(\B,\S) \arrow[hook]{u} \arrow{r}{\sim}[swap]{\Gr} & \LFib(\B) \arrow[hook]{u}
\end{tikzcd} \]
from the $\infty$-category of functors $\B \ra \S$ valued in the $\infty$-category of spaces to the $\infty$-category $\LFib(\B)$ of left fibrations over $\B$.  As a result of the fact that all morphisms in a space are equivalences, given a left fibration $\E \xra{f} \B$, \textit{every} morphism in $\E$ is $f$-cocartesian.

Dually, a cartesian fibration whose fibers are all $\infty$-groupoids is called a \textit{right fibration}, the contravariant Grothendieck construction restricts to an equivalence
\[ \begin{tikzcd}
\Fun(\B^{op},\Cati) \arrow{r}{\Grop}[swap]{\sim} & \CartFib(\B) \\
\Fun(\B^{op},\S) \arrow[hook]{u} \arrow{r}{\sim}[swap]{\Grop} & \RFib(\B) \arrow[hook]{u}
\end{tikzcd} \]
from the $\infty$-category of functors $\B^{op} \ra \S$ to the full subcategory $\RFib(\B) \subset \CartFib(\B)$ of right fibrations over $\B$, and given a right fibration $\E \xra{f} \B$, every morphism in $\E$ is $f$-cartesian.
\end{rem}

\section{Proofs}\label{section of proofs}

In this final section, we prove that our definitions of co/cartesian morphisms and co/cartesian fibrations coincide with the corresponding ``point-set'' definitions in quasicategories; as these latter have been studied extensively, it will follow that all quasicategorical results regarding co/cartesian morphisms and co/cartesian fibrations can be applied either when working model-independently or when working in some other model category of $\infty$-categories.  In order to directly align with the definitions given in \cite[\sec 2.4]{LurieHTT} we will focus on the cartesian variants, but of course our results will immediately apply to the cocartesian variants as well (simply by taking opposites).

\begin{notn}
We will write
\[ \enrhom(-,-) = \enrhom_{s\Set}(-,-) : (s\Set)^{op} \times s\Set \ra s\Set \]
for the internal hom bifunctor in $s\Set$ (relative to its cartesian symmetric monoidal structure).
\end{notn}

For precision and disambiguation, we introduce the following terminology (which pays homage to Joyal and Lurie, the architects of a theory without which the present note could never exist).

\begin{defn}
Let $\ttE \xra{\ttp} \ttB$ be an inner fibration in $s\Set$.  Following \cite[Definition 2.4.1.1]{LurieHTT}, we will say that an edge $\Delta^1 \xra{\ttf} \ttE$ is \bit{JL-$\ttp$-cartesian} (or simply \bit{JL-cartesian}) if the induced map
\[ \ttE_{/\ttf} \ra \ttE_{/\ttf(1)} \underset{\ttB_{/\ttp(\ttf(1))}}{\times}  \ttB_{/\ttp(\ttf)}  \]
lies in $(\bW \cap \bF)_\Joyal \subset s\Set$.\footnote{The map $\ttE_{/\ttf} \ra \ttE_{/\ttf(1)}$ should be thought of simply as ``postcomposition with $\ttf$'': the restriction map $\ttE_{/\ttf} \ra \ttE_{/\ttf(0)}$ lies in $(\bW \cap \bF)_\Joyal$.  Similarly for the map $\ttB_{/\ttp(\ttf)} \ra \ttB_{/\ttp(\ttf(1))}$.}  In this case, we will refer to the edge $\ttf \in \ttE_1$ as a \bit{JL-$\ttp$-cartesian lift} (or simply a \bit{JL-cartesian lift}) of the edge $\ttp(\ttf) \in \ttB_1$ relative to the vertex $\ttf(1) \in \ttE_0$.  Following \cite[Definition 2.4.2.1]{LurieHTT}, we then say that the morphism $\ttf$ is a \bit{JL-cartesian fibration} if every vertex of
\[ \enrhom(\Delta^1,\ttB) \underset{\ev_1,\ttB,\ttf}{\times} \ttE \]
admits an $\ttf$-cartesian lift.
\end{defn}

We now show that our model-independent notion of cartesian morphism is suitably compatible with the quasicategorical notion of a JL-cartesian edge.

\begin{thm}\label{invce of cart morphism}
Let $\ttE \stackrel{\ttp}{\fibn} \ttB$ be a fibration between fibrant objects in $s\Set_\Joyal$ which presents a map $\E \xra{p} \B$ in $\Cati$.  Suppose that a map $\Delta^1 \xra{\ttf} \ttE$ in $s\Set_\Joyal$ presents a $p$-cartesian morphism $[1] \xra{f} \E$.  Then the edge $\ttf \in \ttE_1$ is JL-$\ttp$-cartesian.
\end{thm}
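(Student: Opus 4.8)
The plan is to recognize the Joyal--Lurie comparison map
\[ c \colon \ttE_{/\ttf} \ra \ttE_{/\ttf(1)} \underset{\ttB_{/\ttp(\ttf(1))}}{\times} \ttB_{/\ttp(\ttf)} \]
as a point-set model of the canonical comparison map appearing in the definition of a $p$-cartesian morphism, and then to upgrade the statement that $c$ \emph{presents an equivalence} to the statement that $c$ \emph{is a trivial fibration in $s\Set_\Joyal$} by observing that $c$ is automatically a fibration.

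First I would check that $c$ is a fibration between fibrant objects of $s\Set_\Joyal$.  Since $\ttp$ is in particular an inner fibration and $\{1\} \hookra \Delta^1$ is right anodyne, the map $c$ is a right fibration (\cite[\sec 2.1.2]{LurieHTT}); by the same token $\ttB_{/\ttp(\ttf)} \ra \ttB_{/\ttp(\ttf(1))}$ is a right fibration.  As $\ttE$ and $\ttB$ are fibrant, all of the slice simplicial sets in sight are $\infty$-categories (\cite[\sec 1.2.9]{LurieHTT}), and a right fibration between $\infty$-categories is a fibration in $s\Set_\Joyal$; in particular the strict pullback defining the target of $c$ is also a homotopy pullback and is again fibrant.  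Thus $c$ is a fibration between fibrant objects, and since a map between fibrant objects of $s\Set_\Joyal$ lies in $\bW_\Joyal$ if and only if it presents an equivalence in $\Cati$, it remains only to show that $c$ presents an equivalence.

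Write $e_i = f(i)$.  Upon passage to $\Cati$, the slices $\ttE_{/\ttf}$, $\ttE_{/\ttf(1)}$, $\ttB_{/\ttp(\ttf)}$, $\ttB_{/\ttp(\ttf(1))}$ present the model-independent slices $\E_{/f}$, $\E_{/e_2}$, $\B_{/p(f)}$, $\B_{/p(e_2)}$ -- over the evident arrows and vertices, and compatibly with the restriction maps (cf.\ \cite[\sec A]{MIC-sspaces}); here $\B_{/p(f)}$ denotes the slice of $\B$ over the \emph{arrow} $p(f)$, which maps to $\B_{/p(e_2)}$ by postcomposition with $p(f)$.  Since the target pullback is a homotopy pullback, $c$ therefore presents the natural map
\[ \E_{/f} \ra \E_{/e_2} \underset{\B_{/p(e_2)}}{\times} \B_{/p(f)} . \]
Now the observation that restriction along $\{0\} \hookra \Delta^1$ yields a map $\ttE_{/\ttf} \ra \ttE_{/\ttf(0)}$ in $(\bW \cap \bF)_\Joyal$ -- and likewise for $\ttB$ -- furnishes equivalences $\E_{/f} \xra{\sim} \E_{/e_1}$ and $\B_{/p(f)} \xra{\sim} \B_{/p(e_1)}$ which intertwine the three maps in play above with postcomposition by $f$, postcomposition by $p(f)$, and $p$ respectively.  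Under these identifications, the map presented by $c$ becomes precisely the canonical map
\[ \E_{/e_1} \ra \E_{/e_2} \underset{\B_{/p(e_2)}}{\times} \B_{/p(e_1)} , \]
which is an equivalence exactly because $f$ is assumed $p$-cartesian.  Hence $c$ presents an equivalence, and combining this with the previous paragraph, $c \in (\bW \cap \bF)_\Joyal$; that is, $\ttf$ is JL-$\ttp$-cartesian.

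The main obstacle is the bookkeeping underlying the third paragraph: one must verify that the Joyal-model-categorical slice construction models the model-independent one not merely objectwise but \emph{compatibly with all the relevant structure maps} -- the two legs of $c$, the trivial-fibration restrictions onto the sources, and the right-fibration restrictions onto the targets -- so that $c$ really does present the canonical comparison map rather than merely \emph{some} map between the correct source and target.  This is where the combinatorics of joins must be handled carefully, although it requires no input beyond the cited compatibility between point-set and model-independent slices.
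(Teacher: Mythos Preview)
Your argument is correct, and for the weak-equivalence half it coincides with the paper's: both identify the target fiber product as a homotopy pullback, pass to $\Cati$, and then use the trivial fibrations $\ttE_{/\ttf}\to\ttE_{/\ttf(0)}$ and $\ttB_{/\ttp(\ttf)}\to\ttB_{/\ttp(\ttf(0))}$ to rewrite the presented map as exactly the comparison map in the definition of $p$-cartesian.

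The genuine difference is in how $c\in\bF_\Joyal$ is established. The paper builds a Quillen adjunction
\[
\alpha:\Fun([1],s\Set_\Joyal)_\Reedy\;\rightleftarrows\;(s\Set_\Joyal)_{\Delta^1/}:\beta,
\qquad
\beta(\Delta^1\xra{\tty}\ttY)=(\ttY_{/\tty}\to\ttY_{/\tty(1)}),
\]
verifies by a somewhat intricate cube diagram that $\alpha$ is left Quillen, and then reads off $c$ as the relative matching map of the Reedy fibration $\beta(\ttp)$. Your route is more direct: the join--pushout lemma of \cite[\S2.1.2]{LurieHTT} shows immediately that $c$ is a right fibration whenever $\ttp$ is an inner fibration, and a right fibration between quasicategories is a categorical fibration. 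This is shorter and avoids the auxiliary model structure entirely; the paper's approach, on the other hand, packages the combinatorics once and for all into a Quillen adjunction, which could be reused elsewhere. One small correction: the relevant hypothesis in the join--pushout lemma is that the \emph{right-horn} inclusions $\Lambda^n_i\hookra\Delta^n$ (for $0<i\le n$) are right anodyne, while $\{1\}\hookra\Delta^1$ need only be a cofibration; your invocation of ``$\{1\}\hookra\Delta^1$ is right anodyne'' is not the operative condition, though the conclusion that $c$ is a right fibration stands.
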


\begin{proof}
We must show that the map
\[ \ttE_{/\ttf} \ra \ttE_{/\ttf(1)} \underset{\ttB_{/\ttp(\ttf(1))}}{\times} \ttB_{/\ttp(\ttf)} \]
in $s\Set_\Joyal$ lies in $(\bW \cap \bF)_\Joyal$.

First of all, using the characterization $\bF_\Joyal = \rlp((\bW \cap \bC)_\Joyal)$, it is easy to see
\begin{itemize}
\item that the object $\ttE_{/\ttf(1)} \in s\Set_\Joyal$ is fibrant and
\item that the map $\ttB_{/\ttp(\ttf)} \ra \ttB_{/\ttp(\ttf(1))}$ is a fibration in $s\Set_\Joyal$.
\end{itemize}
Hence, it follows from the Reedy trick that this fiber product is in fact a homotopy pullback in $s\Set_\Joyal$.  Thus, this map in $s\Set_\Joyal$ presents the map
\[ \E_{/f} \ra \E_{/f(1)} \underset{\B_{/p(f(1))}}{\times} \B_{/p(f)} , \]
in $\Cati$, which can be canonically identified with the map
\[ \E_{/f(0)} \ra \E_{f(1)} \underset{\B_{/p(f(1))}}{\times} \B_{/p(f(0))} \]
in $\Cati$ and therefore lies in $\bW_\Joyal$ by assumption.

To see that it also lies in $\bF_\Joyal$, we argue as follows.  We claim that there is a Quillen adjunction
\[ \alpha : \Fun([1],s\Set_\Joyal)_\Reedy \adjarr (s\Set_\Joyal)_{\Delta^1/} : \beta , \]
where
\begin{itemize}
\item we equip $[1]$ with the Reedy category structure determined by the degree function $0 \mapsto 0$ and $1 \mapsto 1$,
\item we define
\[ \beta ( \Delta^1 \xra{\tty} \ttY ) = ( \ttY_{/\tty} \to \ttY_{/\tty(1)} ) , \]
and
\item we define
\[ \alpha( \ttZ \ra \ttW ) = \left( \Delta^1 \ra \left( \ttZ \star \Delta^1 \coprod_{\ttZ \star \Delta^{\{1\}}} \ttW \star \Delta^{\{1\}} \right) \right) . \]
\end{itemize}
It is not hard to see that indeed $\alpha \adj \beta$, so it suffices to show that $\alpha$ is a left Quillen functor.  For this, given a map
\[ \begin{tikzcd}
\ttZ_1 \arrow{r}{\ttg_1} \arrow{d} & \ttW_1 \arrow{d} \\
\ttZ_2 \arrow{r}[swap]{\ttg_2} & \ttW_2
\end{tikzcd} \]
in $\Fun([1],s\Set_\Joyal)_\Reedy$ (reading the square vertically), observe that for this to be a (resp.\! acyclic) cofibration is precisely to require that the two relative latching maps $\ttZ_1 \ra \ttZ_2$ and $\ttW_1 \coprod_{\ttZ_1} \ttZ_2 \ra \ttW_2$ are (resp.\! acyclic) cofibrations.  For simplicity, let us write the composite
\[ \Fun([1],s\Set) \xra{\alpha} s\Set_{\Delta^1/} \ra s\Set \]
of our left adjoint with the evident forgetful functor simply as $\Fun([1],s\Set) \xra{\alpha'} s\Set$.  Now, assuming our map $\ttg_1 \ra \ttg_2$ is a cofibration in $\Fun([1],s\Set_\Joyal)_\Reedy$, then its image $\alpha'(\ttg_1) \ra \alpha'(\ttg_2)$ fits into the diagram
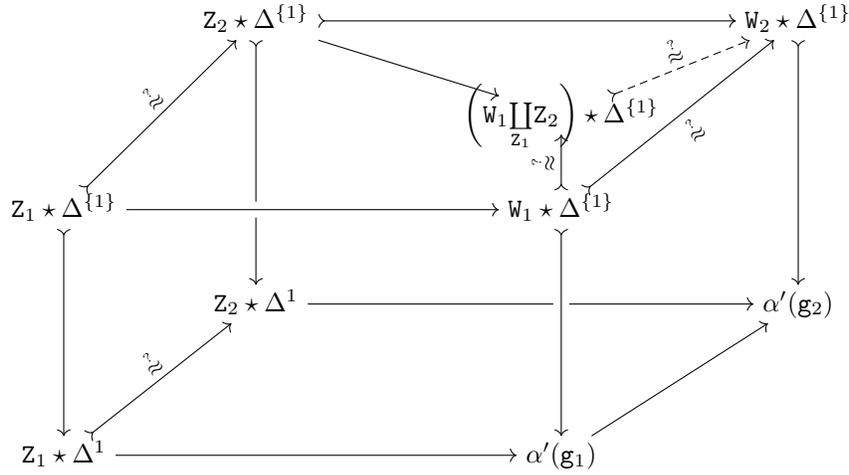
\begin{figure}[h]
\[ \begin{tikzcd}
 & \ttZ_2 \star \Delta^{\{1\}} \arrow[tail]{rrr} \arrow[tail]{ddd} \arrow{rrd} & & & \ttW_2 \star \Delta^{\{1\}} \arrow[tail]{ddd} \\
 & & &    \left( \ttW_1 \underset{\ttZ_1}{\coprod} \ttZ_2 \right) \star \Delta^{\{1\}}    \arrow[dashed, tail]{ru}[sloped, pos=0.6]{\stackrel{?}{\approx}} \\
 \ttZ_1 \star \Delta^{\{1\}} \arrow[tail]{ruu}[sloped, pos=0.6]{\stackrel{?}{\approx}} \arrow[crossing over]{rrr} \arrow[tail]{ddd} & & & \ttW_1 \star \Delta^{\{1\}} \arrow[tail]{u}[sloped, anchor=south]{\stackrel{?}{\approx}} \arrow[tail]{ruu}[sloped, swap, pos=0.45]{\stackrel{?}{\approx}} \\
 & \ttZ_2 \star \Delta^1 \arrow{rrr} & & & \alpha'(\ttg_2) \\ \\
\ttZ_1 \star \Delta^1 \arrow[tail]{ruu}[sloped, pos=0.6]{\stackrel{?}{\approx}} \arrow{rrr} & & & \alpha'(\ttg_1) \arrow{ruu} \arrow[leftarrowtail, crossing over]{uuu}
\end{tikzcd} \]
\caption{The diagram in $s\Set_\Joyal$ used in the proof of \cref{invce of cart morphism}.}
\label{diagram in proof of invce of cart morphism}
\end{figure}
in $s\Set_\Joyal$ of \cref{diagram in proof of invce of cart morphism}, in which
\begin{itemize}
\item the front and back faces are pushouts by definition;
\item the quadrilateral contained in the top face is a pushout since in the composite
\[ s\Set \xra{ - \star \Delta^{\{1\}}} s\Set_{\Delta^{\{1\}}/} \ra s\Set \]
where the second functor is forgetful,
\begin{itemize}
\item the first functor commutes with colimits by \cite[Remark 1.2.8.2]{LurieHTT} and
\item the second functor commutes with pushouts since the walking span $\Nerve^{-1}(\Lambda^2_0) \in \Cat$ has an initial object
\end{itemize}
(although really we have only rewritten this pushout to improve readability), and the dotted arrow is then the induced map;
\item the left face is a pushout by inspection;
\item all maps labeled as cofibrations are such
\begin{itemize}
\item by inspection,
\item by the assumption that $\ttg_1 \ra \ttg_2$ is a cofibration in $\Fun([1],s\Set_\Joyal)_\Reedy$,
\item because $\bC_\Joyal \subset s\Set$ is closed under pushouts, or
\item because $\bC_\Joyal \subset s\Set$ is closed under composition;
\end{itemize}
and
\item the maps labeled with the symbol $\stackrel{?}{\approx}$ are weak equivalences in $s\Set_\Joyal$ if $\ttg_1 \ra \ttg_2$ is additionally a weak equivalence in $\Fun([1],s\Set_\Joyal)_\Reedy$
\begin{itemize}
\item by the assumption that $\ttg_1 \ra \ttg_2$ is an acyclic cofibration in $\Fun([1],s\Set_\Joyal)_\Reedy$,
\item because $(\bW \cap \bC)_\Joyal \subset s\Set$ is closed under pushouts, or
\item because $(\bW \cap \bC)_\Joyal \subset s\Set$ is closed under composition.
\end{itemize}
\end{itemize}
Now, because the left and back faces are both pushouts, then the composite rectangle which they form is also a pushout.  But this is the same as the composite rectangle formed by the front and right faces.  As the front face is a pushout, it follows that the right face is also a pushout as well.  Thus, the functor
\[ \Fun([1],s\Set_\Joyal)_\Reedy \xra{\alpha'} s\Set_\Joyal \]
preserves both cofibrations and acyclic cofibrations, since these are each closed under pushout in $s\Set_\Joyal$.  But the cofibrations and acyclic cofibrations in $(s\Set_\Joyal)_{\Delta^1/}$ are created by the forgetful functor $s\Set_{\Delta^1/} \ra s\Set_\Joyal$, and so the functor
\[ \Fun([1],s\Set_\Joyal)_\Reedy \xra{\alpha} (s\Set_\Joyal)_{\Delta^1/} \]
is indeed a left Quillen functor, as claimed.

We now return to our given composite
\[ \Delta^1 \xra{\ttf} \ttE \stackrel{\ttp}{\fibn} \ttB \]
in $s\Set_\Joyal$.  This can be considered as defining a fibration in $(s\Set_\Joyal)_{\Delta^1/}$, and hence applying our right Quillen functor
\[ (s\Set_\Joyal)_{\Delta^1/} \xra{\beta} \Fun([1],s\Set_\Joyal)_\Reedy \]
yields another fibration.  In particular, the resulting relative matching map
\[ \ttE_{/\ttf} \ra \ttE_{/\ttf(1)} \underset{\ttB_{/\ttp (\ttf(1))}}{\times} \ttB_{/\ttp(\ttf)} \]
at the object $1 \in [1]$ must lie in $\bF_\Joyal \subset s\Set$, as desired.
\end{proof}

Using \cref{invce of cart morphism}, we now show that our model-independent notion of a cartesian fibration is suitably compatible with the quasicategorical notion of a JL-cartesian fibration.

\begin{cor}\label{invce of cart fibn}
Let $\ttE \stackrel{\ttp}{\fibn} \ttB$ be a fibration between fibrant objects in $s\Set_\Joyal$ which presents a map $\E \xra{p} \B$ in $\Cati$.  If $p$ is a cartesian fibration, then $\ttp$ is a JL-cartesian fibration.
\end{cor}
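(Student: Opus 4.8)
The plan is to bootstrap from \cref{invce of cart morphism}: that theorem already converts a model-independent $p$-cartesian morphism into a JL-$\ttp$-cartesian edge, so the only real work is to produce, for each vertex of $\enrhom(\Delta^1,\ttB) \times_{\ev_1,\ttB,\ttp} \ttE$, a $p$-cartesian morphism of $\E$ that is represented by an edge of $\ttE$ with \emph{exactly} the prescribed boundary data. Concretely, I would fix a vertex of this fiber product, i.e.\ a pair $(\ttb,\tte)$ with $\ttb \in \ttB_1$, $\tte \in \ttE_0$, and $\ttb(1) = \ttp(\tte)$, and aim to produce an edge $\ttf \in \ttE_1$ that is JL-$\ttp$-cartesian and satisfies $\ttp(\ttf) = \ttb$ and $\ttf(1) = \tte$ on the nose.

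Next I would record the relevant bookkeeping. Since $\Delta^{\{1\}} \hookra \Delta^1$ is a cofibration and $\ttp$ is a fibration in $s\Set_\Joyal$, the pushout--product axiom (in its pullback--hom form) shows that
\[ \enrhom(\Delta^1,\ttE) \ra \enrhom(\Delta^1,\ttB) \underset{\ev_1,\ttB,\ttp}{\times} \ttE \]
is a fibration in $s\Set_\Joyal$, and likewise that $\ev_1 : \enrhom(\Delta^1,\ttB) \ra \ttB$ is a fibration; as $\ttE$ and $\ttB$ are fibrant, every object in sight is fibrant, the displayed fiber product is a homotopy pullback (by the Reedy trick, exactly as in the proof of \cref{invce of cart morphism}), and hence it presents $\Fun([1],\B) \times_{t,\B,p} \E$ in $\Cati$.

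Now I would run the argument. The pair $(\ttb,\tte)$ presents an object $x$ of $\Fun([1],\B) \times_{t,\B,p} \E$, and since $p$ is a cartesian fibration, $x$ admits a $p$-cartesian lift $[1] \xra{f} \E$. Choose an edge $\ttf_0 \in \ttE_1$ presenting $f$ (possible because $\enrhom(\Delta^1,\ttE)$ is a fibrant quasicategory presenting $\Fun([1],\E)$). The image of $\ttf_0$ in $\enrhom(\Delta^1,\ttB) \times_{\ev_1,\ttB,\ttp} \ttE$ presents the same object $x$ that $(\ttb,\tte)$ does, so these two vertices are equivalent in that quasicategory; fix an equivalence between them. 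The crucial step is to transport $\ttf_0$ along this equivalence: since $\{0\} \hookra J$ (with $J$ the nerve of the free-living isomorphism) is an acyclic cofibration in $s\Set_\Joyal$, the fibration displayed above admits a lift of the chosen equivalence starting at $\ttf_0$; its opposite endpoint is an edge $\ttf \in \ttE_1$ with $\ttp(\ttf) = \ttb$ and $\ttf(1) = \tte$ literally, and since $\ttf$ is equivalent to $\ttf_0$ in $\enrhom(\Delta^1,\ttE)$ it still presents the $p$-cartesian morphism $f$. By \cref{invce of cart morphism}, $\ttf$ is JL-$\ttp$-cartesian, hence is the required JL-$\ttp$-cartesian lift of $\ttb$ relative to $\tte$; as $(\ttb,\tte)$ was arbitrary, $\ttp$ is a JL-cartesian fibration.

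I expect the main obstacle to be precisely this strictification: the model-independent hypothesis hands us the $p$-cartesian morphism only up to equivalence, whereas the definition of a JL-cartesian fibration demands a lift whose source vertex and projection are the prescribed data on the nose. Discharging it cleanly rests on two points --- that $\enrhom(\Delta^1,\ttE) \ra \enrhom(\Delta^1,\ttB) \times_{\ev_1,\ttB,\ttp} \ttE$ is a fibration between fibrant objects of $s\Set_\Joyal$ (so that it enjoys the isofibration-style path-lifting property for equivalences), and that its target is a genuine presentation of $\Fun([1],\B) \times_{t,\B,p} \E$ --- both of which are the sort of model-categorical bookkeeping assembled in \cite[\sec A]{MIC-sspaces}. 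Beyond these, no further input is needed past \cref{invce of cart morphism} itself.
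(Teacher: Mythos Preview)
Your proposal is correct and follows the same overall strategy as the paper: reduce to \cref{invce of cart morphism}, check that the strict fiber product $\enrhom(\Delta^1,\ttB) \times_{\ev_1,\ttB,\ttp} \ttE$ presents $\Fun([1],\B) \times_{t,\B,p} \E$, and then strictify the model-independent $p$-cartesian lift to an edge of $\ttE$ with the prescribed boundary data on the nose.

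The difference lies in how the strictification is carried out. The paper packages the constraints $\ev_1(\tilde{\ttf}) = \tte_2$ and $\ttp(\tilde{\ttf}) = \ttf$ as a strict limit over a diagram of shape $\Nerve^{-1}(\sd^2(\Delta^1))$, equips that indexing category with a Reedy structure, and verifies Reedy fibrancy (in particular checking by hand that the matching map $\enrhom(\Delta^1,\ttE) \to \ttE \times \enrhom(\Delta^1,\ttB)$ lies in $\bF_\Joyal$); the strict limit is then a homotopy limit, so the $p$-cartesian lift, viewed as an object of the $\infty$-categorical limit, is automatically represented by a vertex with exact boundary data. You instead invoke the pullback--hom form of the pushout--product axiom once to see that $\enrhom(\Delta^1,\ttE) \to \enrhom(\Delta^1,\ttB) \times_{\ev_1,\ttB,\ttp} \ttE$ is a categorical fibration between quasicategories, pick any representing edge $\ttf_0$, and then transport it along a $J$-lift of the equivalence in the base. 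Your route is shorter and avoids the $\sd^2(\Delta^1)$ Reedy apparatus, at the cost of appealing to the isofibration property of categorical fibrations and to Joyal's characterization of equivalences as edges extending over $J$; the paper's route is more self-contained within the Reedy framework it has already set up in the proof of \cref{invce of cart morphism}. Both arguments ultimately hinge on the same pullback--hom fibration, just deployed differently.
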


\begin{proof}
Suppose we are given any edge $\ttf \in \ttB_1$.  Let us write $\delta_0(\ttf) = \ttb_2 \in \ttB_0$ and $\delta_1(\ttf) = \ttb_1 \in \ttB_0$.  Suppose further that we are given any vertex $\tte_2 \in \ttE_0$ with $\ttp(\tte_2) = \ttb_2$.  Then we must find a JL-$\ttp$-cartesian lift of the edge $\ttf \in \ttB_1$ relative to the vertex $\tte_2 \in \ttE_0$.

Let us respectively write $e_1 \xra{\tilde{f}} e_2$ and $b_1 \xra{f} b_2$ for the morphisms in $\E$ and $\B$ presented by the maps $\Delta^1 \xra{\tilde{\ttf}} \ttE$ and $\Delta^1 \xra{\ttf} \ttB$ in $s\Set_\Joyal$ (for any choice of edge $\tilde{\ttf} \in \ttE_1$).  Then, according to \cref{invce of cart morphism}, for an edge $\tilde{\ttf} \in \ttE_1$ to be $\ttp$-cartesian, it suffices to verify that the morphism $\tilde{f}$ in $\E$ is $p$-cartesian.

Now, the given data define a vertex
\[ ( \ttf , \tte_2 ) \in \left( \enrhom(\Delta^1,\ttB) \underset{\ev_1 , \ttB , \ttp}{\times} \ttE \right)_0 \]
of the fiber product in $s\Set$.  Moreover, by \cite[Proposition 1.2.7.3(1)]{LurieHTT} and the Reedy trick, this fiber product is a homotopy pullback in $s\Set_\Joyal$.  Hence, this vertex defines an object
\[ ( f , b_2) \in \Fun([1],\B) \underset{t,\B,p}{\times} \E \]
of the pullback in $\Cati$.

Next, it is easy to check that the map
\[ \enrhom(\Delta^1,\ttE) \xra{\enrhom(\Delta^1,\ttp)} \enrhom(\Delta^1 , \ttB) \]
lies in $\bF_\Joyal \subset s\Set$, simply by using
\begin{itemize}
\item the fact that $\bF_\Joyal = \rlp( ( \bW \cap \bC)_\Joyal)$,
\item the adjunction $- \times \Delta^1 : s\Set \adjarr s\Set : \enrhom_{s\Set}(\Delta^1,-)$, and
\item the fact that $s\Set_\Joyal \xra{- \times \Delta^1} s\Set_\Joyal$ is a left Quillen functor and hence in particular preserves acyclic cofibrations.
\end{itemize}
Moreover, the map
\[ \enrhom(\Delta^1,\ttE) \xra{\ev_1} \ttE \]
also lies in $\bF_\Joyal \subset s\Set$, since by applying the dual of \cite[Corollary 2.4.7.12]{LurieHTT} to the map $\ttE \xra{\id_{\ttE}} \ttE$ in $s\Set^f_\Joyal$ we see that it is a JL-cocartesian fibration, which by \cite[Remark 2.0.0.5]{LurieHTT} implies that it is in particular a fibration in $s\Set_\Joyal$.  Thus, our conditions that $\delta_0(\tilde{\ttf}) = \tte_2$ and that $\ttp(\tilde{\ttf}) = \ttf$ translate into the single condition that $\tilde{\ttf} \in \ttE_1$ define a vertex of the limit
\[ \lim \left( \begin{tikzcd}
& & \Delta^0 \arrow{d}{\tte_2} \\
& \enrhom(\Delta^1,\ttE) \arrow[two heads]{r}[swap]{\ev_1} \arrow[two heads]{d}{\enrhom(\Delta^1,\ttp)} & \ttE \\
\Delta^0 \arrow{r}[swap]{\ttf} & \enrhom(\Delta^1,\ttB)
\end{tikzcd} \right) \]
in $s\Set_\Joyal$.

Now, we claim that the above limit is in fact a homotopy limit in $s\Set_\Joyal$.  For this, we appeal to a more elaborate version of the Reedy trick: we endow the category
\[ \Nerve^{-1}(\sd^2(\Delta^1)) = ( \bullet \ra \bullet \la \bullet \ra \bullet \la \bullet) \in \Cat \]
with the Reedy structure determined by the degree function described by the picture $(0 \ra 1 \la 2 \ra 1 \la 0)$.  Using \cite[Proposition 15.10.2(1)]{Hirsch}, it is easy to see that this Reedy category has cofibrant constants, and hence by \cite[Theorem 15.10.8(1)]{Hirsch} we obtain a Quillen adjunction
\[ \const : s\Set_\Joyal \adjarr \Fun(\Nerve^{-1}(\sd^2(\Delta^1)),s\Set_\Joyal)_\Reedy : \lim . \]
Then, to see that the diagram in the above limit defines a fibrant object of $\Fun(\Nerve^{-1}(\sd^2(\Delta^1)),s\Set_\Joyal)_\Reedy$, since it is objectwise fibrant, it only remains to check that the latching map
\[ \enrhom(\Delta^1,\ttE) \xra{(\ev_1,\enrhom(\Delta^1,\ttp)} \ttE \times \enrhom(\Delta^1,\ttB) \]
lies in $\bF_\Joyal \subset s\Set$.  For this, we use the characterization $\bF_\Joyal = \rlp( ( \bW \cap \bC)_\Joyal)$: given any solid commutative square
\[ \begin{tikzcd}
\ttY \arrow{r} \arrow[tail]{d}[swap]{\ttq}[sloped, anchor=south]{\approx} & \enrhom(\Delta^1,\ttE) \arrow{d}{(\ev_1,\enrhom(\Delta^1,\ttp)} \\
\ttZ \arrow{r} \arrow[dashed]{ru} & \ttE \times \enrhom(\Delta^1,\ttB)
\end{tikzcd} \]
in $s\Set_\Joyal$, unwinding the definitions we see that a dotted lift is equivalent to a dotted lift in the diagram
\[ \begin{tikzcd}
(\ttY \times \Delta^1) \underset{\Delta^{\{1\}} , \ttY , \ttq}{\coprod} \ttZ \arrow{r} \arrow{d} & \ttE \arrow{d}{\ttp} \\
\ttZ \times \Delta^1 \arrow{r} \arrow[dashed]{ru} & \ttB
\end{tikzcd} \]
in $s\Set$.  But it is easy to see that the left map lies in $(\bW \cap \bC)_\Joyal$, while the right map lies in $\bF_\Joyal$ by assumption.  Hence the diagram in the above limit defines a fibrant object of $\Fun(\Nerve^{-1}(\sd^2(\Delta^1)),s\Set_\Joyal)_\Reedy$, and so the above limit is a homotopy limit in $s\Set_\Joyal$ and thus presents the limit
\[ \lim \left( \begin{tikzcd}
& & \{ e_2 \} \\
& \Fun([1],\E) \arrow{r}[swap]{t} \arrow{d}{\Fun([1],p)} & \E \arrow[hookleftarrow]{u} \\
\{ f \} \arrow[hook]{r} & \Fun([1],\B)
\end{tikzcd} \right) \]
in $\Cati$.

Now, we have assumed that $p$ is a cartesian fibration, so in particular there must exist a $p$-cartesian lift $\tilde{f}$ of $f$ relative to the object $e_2$.  This defines an object of the above limit in $\Cati$, which must therefore be represented by a vertex of the above homotopy limit in $s\Set_\Joyal$: this selects the desired JL-$\ttp$-cartesian lift $\tilde{\ttf} \in \ttE_1$ of $\ttf \in \ttB_1$ relative to $\tte_2 \in \ttE_0$.  So the map $\ttE \stackrel{\ttp}{\fibn} \ttB$ in $s\Set_\Joyal$ is indeed a JL-cartesian fibration, as claimed.
\end{proof}

\bibliographystyle{amsalpha}
\bibliography{grjl}{}

\end{document}